\documentclass[10pt,oneside,a4paper]{article}

\usepackage{amssymb}
\usepackage{amsmath}
\usepackage{amsthm}
\usepackage{color}
\usepackage{graphicx}
\usepackage{wrapfig}
\usepackage{float}
\usepackage{subfigure}
\usepackage{rotating}
\usepackage{cite}
\usepackage{caption}
\usepackage{hyperref}
\usepackage{enumitem}
\usepackage{varwidth}

\setlength{\oddsidemargin}{-10mm}
\setlength{\evensidemargin}{0mm}
\setlength{\textwidth}{16cm}
\setlength{\textheight}{22cm}

\newtheorem{thm}{Theorem}[section]
\newtheorem{cor}[thm]{Corollary}

\newtheorem{lem}[thm]{Lemma}

\theoremstyle{remark}
\newtheorem{rem}[thm]{Remark}

\theoremstyle{definition}

\def\RR{\mathbb{R}}

\def\ZZ{\mathbb{Z}}
\def\TT{\mathbb{T}}
\def\CritSet{\mathcal{C}}

\DeclareMathOperator{\arccot}{arccot}

\newcommand{\comment}[1]{}
\setlength{\marginparwidth}{1.2in}
\let\oldmarginpar\marginpar
\renewcommand\marginpar[1]{\-\oldmarginpar[\raggedleft\footnotesize #1]
{\raggedright\footnotesize #1}}

\author{
Jordi-Llu\'is Figueras
\thanks
{
Department of Mathematics, Uppsala University, 
Box 480, 75106 Uppsala (Sweden). {\tt figueras@math.uu.se}.
}
\and
Thomas Ohlson Timoudas
\thanks
{
Department of Mathematics, KTH Royal Institute of Technology, 
Institutionen f\"or matematik, KTH, 100 44 Stockholm (Sweden). {\tt ttohlson@kth.se}.
}
}

\title{Sharp $\frac12$-H\"older continuity of the Lyapunov exponent at
the bottom of the spectrum for a class of Schr\"odinger cocycles}

\date{}

\begin{document}
	
\maketitle

\begin{abstract}
We consider a similar type of scenario for the disappearance of uniform of 
hyperbolicity as in Bjerkl\"ov and Saprykina (2008 \emph{Nonlinearity} \textbf{21}), 
where it was proved that the minimum distance between invariant stable 
and unstable bundles has a linear power law dependence on parameters. In 
this scenario we prove that the Lyapunov exponent is sharp $\frac12$-H\"older continuous.

In particular, we show that the Lyapunov exponent of Schr\"odinger cocycles with a potential having a unique non-degenerate minimum, is sharp $\frac12$-H\"older continuous below the lowest energy of the spectrum, in the large coupling regime.
\end{abstract}

\section{Introduction.}\label{section: introduction}

We consider quasi-periodic cocycles 
\begin{equation}\label{eq:linear cocycle}
\left\{
\begin{array}{lcl}
\mathbb{T}\times\mathbb R^2&\longrightarrow& \mathbb{T}\times\mathbb R^2\\
(\theta, v)&\longrightarrow& (\theta+\omega, A(\theta)v)
\end{array}
\right.
\end{equation}
where $\omega$ is an irrational number and $A\in\mathcal{C}^1(\mathbb T, \text{SL}(2, \mathbb R))$. A cocycle is uniformly hyperbolic if there exists two continuous maps $W_i:\mathbb T\rightarrow \mathbb R^2,$ $i=1,2$, that are invariant, $A(\theta) W_i(\theta)=\lambda_i(\theta) W_i(\theta+\omega)$, with $\lambda_i:\mathbb T\rightarrow \mathbb R-\left\{0\right\}$ and
\begin{equation}
\left|\int_{\mathbb T}\log|\lambda_i(\theta)|d\theta\right| > 0.\label{eq: UHCondition}
\end{equation}
Since we consider only $\text{SL}(2, \mathbb R)$ cocycles, one of these integrals will be positive, and the other integral negative (simply minus the first one). Without loss of generality, we may assume that
\begin{equation}
\int_{\mathbb T}\log|\lambda_1(\theta)|d\theta > 0,\label{eq: PositiveExponent}
\end{equation}
and we call the quantity $\int_{\mathbb T}\log|\lambda_1(\theta)|d\theta$, the (maximal) Lyapunov exponent.

In this paper we tackle the problem of H\"older continuity of the Lyapunov exponent of one-parametric families of quasi-periodic cocyles with transfer matrix of the form 
\[
A_E(\theta)=
\begin{pmatrix}
V(\theta)-E & -1\\
1 & 0
\end{pmatrix},  
\]
where the function $V \in \mathcal{C}^2(\TT, \RR)$ has a non-degenerate minimum. Since these cocycles arise from the study of the spectrum of discrete Schr\"odinger operators $H_\theta:\ell^2(\mathbb Z)\rightarrow \ell^2(\mathbb Z)$ of the form 
\begin{equation}\label{eq: sch operator}
(H_\theta x)_n = x_{n+1}+V(\theta+n\omega) x_n+x_{n-1},
\end{equation}
it is known that the set of parameters $E$ such that the cocycle is not uniformly hyperbolic is a compact set on the real line. From now on, we will denote by $E_0$ the bottom edge of this set.

More concretely, in this paper we prove that on the whole interval $(-\infty, E_0]$ the Lyapunov exponent is $\frac12$-H\"older continuous but not $(\frac12+\delta)$-H\"older continuous for any positive $\delta$. Since the Lyapunov exponent is real analytic in the interval $(-\infty, E_0)$ it implies that its asymptotic behaviour at $E_0$ is square root like. This problem is part of the open conjectures appearing in \cite{Haro_Llave_06} on the asymptotics of disappearance of normally hyperbolic invariant tori in quasi-periodically forced systems (see also\cite{Calleja_Figueras_12, Figueras_Haro_15, Figueras_Haro_16} for further numerical studies in different contexts). There, the authors numerically study one-parameter families of quasi-periodic skew product systems, and conjectured that the normal dynamics around the invariant tori satisfy that the minimum distance between the invariant bundles $W_1$ and $W_2$ (defined above) satisfies
\begin{equation}
\text{dist}(E)\sim a(E-E_0),\label{eq: LinearAsymptoticsConjecture} 
\end{equation}
and that the (maximal) Lyapunov exponent $L(E)$ of $A_E$, satisfies
\begin{equation}
L(E)\sim L(E_0)+b\sqrt{E_0-E},\label{eq: LEAsymptoticsConjecture}
\end{equation}
where $a, b\neq 0$ are constants depending on the system and $L(E_0) > 0$.

A step forward towards solving this problem was taken in \cite{Bjerklov_Saprikina_08}, where the authors proved that the distance is asymptotically linear in the case of linear H\'enon maps. These are 
one-parameter linear cocycles of the form \eqref{eq:linear cocycle} with transfer matrix
\[
A_E(\theta)=
\begin{pmatrix}
V(\theta)-E & 1\\
1 & 0
\end{pmatrix},
\]
with $V$ being any $\mathcal C^2$ function close to $V(\theta)=\frac{1}{1+\lambda^2\sin^2(\pi x)}$, $\lambda\gg 1$ and $E$ being a parameter.

\begin{rem}
A similar result as in \cite{Bjerklov_Saprikina_08} with the same type 
of potential $V$ as in this paper appears in \cite{OhlsonTimoudas_18}.
\end{rem}

In the literature there are plenty of results about the H\"older continuity of the Lyapunov exponent. Among them, in \cite{Jitormiskaya_Koslover_Schulteis_09} the authors prove that, if the cocycle is analytic, then the Lyapunov exponent is continuous with respect the parameter $E$. In \cite{Goldstein_Schlag_2001}
they established that it is H\"older continuous.

Proving asymptotics for the distance and Lyapunov exponents when the potential 
$V$ is close to zero
is quite easy: both are square root. This can be proved by
noticing that the collision of the invariant stable and unstable bundles is
smooth, so producing a saddle-node bifurcation, which implies that both the 
distance between the invariant bundles and the Lyapunov exponent have 
square root asymptotics.

We prove, under the assumption that the minimum distance satisfies linear asymptotics, as in \eqref{eq: LinearAsymptoticsConjecture}, together with some other general assumptions, that the Lyapunov exponent has (almost) square-root asymptotic behaviour, as in \eqref{eq: LEAsymptoticsConjecture}, but where the constant is allowed to fluctuate between two fixed, positive constants.

\section{Statement of the results}

Suppose that $(\omega, A_t(\theta))$ is an analytic family of quasi-periodic cocycles $A_t \in \mathcal C^1(\RR \times \TT, \text{SL}(2, \RR))$, given by
\begin{equation}
A_t(\theta) = A(\theta) e^{tw},\label{eq: ParameterDependence}
\end{equation}
where $t$ is a real parameter, $A \in \mathcal C^1(\TT, \text{SL}(2, \RR))$,  and $w = \begin{pmatrix} 0 & 0 \\ 1 & 0 \end{pmatrix}$. The results in this paper hold for any such family of cocycles that satisfies the following list of assumptions. The (maximal) Lyapunov exponent of $A_t$ is the limit
\begin{equation*}
L(t) = \lim \limits_{n \to \infty} \frac1n \int \limits_\TT \log \| A^n_t(\theta) \| d\theta
\end{equation*}
where $A^n(\theta) = A(\theta + (n-1)\omega) \cdots A(\theta)$. If there are two continuous maps $W_i: \TT \to \RR^2$ ($i = 1,2$), that are invariant, $A_t(\theta) W_i(\theta)=\lambda_i(\theta) W_i(\theta+\omega)$, as in \eqref{eq: UHCondition}, then $L(t)$ equals the biggest of the expressions
\begin{equation*}
\lim \limits_{n \to \infty} \frac1n \sum \limits_{j = 0}^{n - 1} \int \limits_\TT \log|\lambda_i(\theta + j \omega)| d\theta.
\end{equation*}
As in the introduction, we may assume that the biggest of them has $i = 1$. Then, it follows that
\begin{equation*}
L(t) = \int_{\mathbb T}\log|\lambda_1(\theta)|d\theta.
\end{equation*}

\subsection{Assumptions}

The first assumption we make is that the cocycle is uniformly hyperbolic, up to some (critical) parameter, and that there is an appropriate invariant cone. From the first assumption, it is not a priori clear that $t_0$ is a bifurcation parameter, but it will follow from the second assumption. We remark that invariant cone condition could be relaxed somewhat, but would require further technical arguments. Here, we use projective coordinates, but other coordinates could be used instead.

\begin{enumerate}[label={A\arabic*$(\epsilon, t_0)$}, ref=A\arabic*]
\item\label{AssInvCone} Suppose that the cocycles $(\omega, A_t)$ are uniformly hyperbolic for every $t \in (t_0 - \epsilon, t_0)$. Moreover, suppose that there are two continuous functions $r^u_t, r^s_t: \TT \to [\frac1C, C]$, where $C > 0$ is independent of $t$, such that
\begin{align*}
\begin{pmatrix} r^u_t(\theta) \\ 1 \end{pmatrix} \text{ and } \begin{pmatrix} r^s_t(\theta) \\ 1 \end{pmatrix}
\end{align*}
are invariant directions for $A_t$, and that they lie in the subspaces $W_1(\theta)$ and $W_2(\theta)$, respectively.
\end{enumerate}
We write the difference between the two functions $r^u_t$ and $r^s_t$ at the point $\theta$ as
\begin{align*}
d(\theta) = r^u_t(\theta) - r^s_t(\theta).
\end{align*}
Because of the fibred structure of the cocycle, the minimum of $d(\theta)$ is the minimum distance between the invariant bundles (in projective coordinates). Whenever $i \leq j$, there is a unique $D_{i,j}(\theta)$ such that
\begin{align*}
d(\theta + j\omega) = d(\theta + i\omega) \cdot D_{i, j}(\theta).
\end{align*}
That is, $D_{i, j}(\theta)$ measures how the difference changes from step $i$ to $j$, where the base step 0 is taken to be at $\theta$. We will make some assumptions about the function $d(\theta)$, and some global estimates for the growth of $D_{i, j}(\theta)$. Let $\epsilon$ be the constant in \ref{AssInvCone}.

\begin{enumerate}[resume, label={A\arabic*$(\epsilon, t_0)$}, ref=A\arabic*]
\item\label{AssInterval} Suppose that, for every $t \in (t_0 - \epsilon, t_0)$, there is a distinguished interval $I = I(t) \subset \TT$, and a distinguished point $\theta_c = \theta_c(t) \in I$, satisfying the following conditions. 
\begin{enumerate}
\item\label{AssExponentialGrowth} Suppose that we are given stopping times $\sigma^\pm = \sigma^\pm(t, \theta) > 0$ such that, for every $\theta \in I$ and $0 < j \leq \sigma^+, 0 < k \leq \sigma^-$, we have
\begin{align}
D_{0, j-1}(\theta) &\geq e^{aj}, \text{ and }\label{eq: AssForwardGrowthBound}\\
D_{-(k-1), 0}(\theta) &\leq e^{-ak},\label{eq: AssBackwardGrowthBound}
\end{align}
where $a > 0$ is independent of both $t$ and $\theta$.
\item\label{AssLinearApproach} There is a unique (global) minimum distance, it is attained at $\theta_c$, and satisfies the linear asymptotics
\begin{align}
d(\theta_c) = \min \limits_{\theta \in \TT} d(\theta) = C_0 |t - t_0| + o(|t - t_0|),\label{eq: LinearDistance}
\end{align}
for some constant $C_0 > 0$ independent of $t$.
\item\label{AssApproxQuadratic} There is a constant $C_1 > 0$, independent of both $t$ and $\theta$, such that for every $\theta \in I$ the distance is approximately quadratic:
\begin{align}
\frac1{C_1} (\theta - \theta_c)^2 \leq d(\theta) - d(\theta_c) \leq C_1 (\theta - \theta_c)^2.\label{eq: AssQuadratic}
\end{align}
\item\label{AssIntervalLength} The length of the interval satisfies
\begin{align}
|I| \geq 2C_2 \cdot \sqrt{d(\theta_c)},\label{eq: AssIntervalLength}
\end{align}
where $C_2 > 0$ is some constant independent of $t$.
\item\label{AssDIstanceOutsideCritical} For every $\theta \in \TT \backslash \{ \theta + k\omega : \theta \in I, \sigma^-(\theta) \leq k \leq \sigma^+(\theta)\}$, the difference satisfies
\begin{align}
d(\theta) \geq \sqrt{d(\theta_c)}.\label{AssOutsideCritical}
\end{align}
\end{enumerate}
\end{enumerate}

It is important to stress that \eqref{eq: LinearDistance} implies that $\min \limits_{\theta \in \TT} d(\theta) \to 0$ as $t \nearrow t_0$, and that $t_0$ is indeed a bifurcation point where uniform hyperbolicity is lost. That is, we have a torus collision at the critical parameter $t_0$.

Lastly, we impose a continuity condition on the Lyapunov exponent.
\begin{enumerate}[resume, label={A\arabic*$(\epsilon, t_0)$}, ref=A\arabic*]
\item\label{AssLEContinuity} Suppose that $L(t)$ is continuous on the interval $(t_0 - \epsilon, t_0]$. That is,
\begin{align}
\lim \limits_{t \nearrow t_0} L(t) = L(t_0).
\end{align}
\end{enumerate}
Since the parameter dependence is analytic, the Lyapunov exponent is continuous at parameters $t$ satisfying that $A_{t}$ is uniformly hyperbolic. Therefore, the assumption is in fact only that $L(t)$ is left-continuous at $t_0$.

We will discuss these assumptions in Section \ref{section: discussion}.

\subsection{Main results}

In this paper we prove

\begin{thm}\label{thm:1}
Given a one-parameter family of cocycles $A_t$ of the form in \ref{eq: ParameterDependence}, and satisfying the assumptions \ref{AssInvCone} and \ref{AssInterval}, for some $\epsilon > 0$ and $t_0$, there exist two positive constants $K_1$ and $K_2$, such that
\begin{equation}\label{eq: bounds}
\frac{K_1}{\sqrt{|t - t_0|}}\leq \frac{d}{dt} L(t)\leq 
\frac{K_2}{\sqrt{|t - t_0|}},
\end{equation}
for every $t \in (t_0 - \epsilon, t_0)$.
\end{thm}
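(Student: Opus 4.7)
The plan is to derive a closed-form expression for $\tfrac{dL}{dt}$ via a cohomological equation plus a telescoping identity, and then estimate the resulting integral using the Lorentzian profile of $d(\theta)$ near $\theta_c$. Writing $L(t) = \int_\TT \log|\lambda^u_t(\theta)|\, d\theta$, where $\lambda^u_t(\theta) = c_t(\theta) r^u_t(\theta) + e_t(\theta)$ is extracted from the second component of $A_t(\theta) v^u_t(\theta) = \lambda^u_t(\theta)\, v^u_t(\theta+\omega)$ (with $v^u_t = (r^u_t, 1)^\top$ and $c_t, e_t$ the bottom row of $A_t$), and using that $A_t = A e^{tw}$ gives $\dot c_t = e_t$ and $\dot e_t = 0$, one gets
\[
\tfrac{dL}{dt} = \int_\TT \frac{e_t(\theta) r^u_t(\theta) + c_t(\theta)\dot r^u_t(\theta)}{\lambda^u_t(\theta)}\, d\theta.
\]

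The key step is to compute $\dot r^u_t$. Differentiating the Möbius action $r^u_t(\theta+\omega) = (a_t r^u_t + b_t)/\lambda^u_t(\theta)$ in $t$ and using $\det A_t = 1$ produces the cohomological equation $\dot r^u_t(\theta+\omega) = (\dot r^u_t(\theta) - r^u_t(\theta)^2)/\lambda^u_t(\theta)^2$. Since the cocycle is UH on $(t_0-\epsilon, t_0)$ by A1, the backward products $\prod_{j=1}^k \lambda^u_t(\theta-j\omega)$ grow exponentially and the unique bounded solution is the absolutely convergent series
\[
\dot r^u_t(\theta) = -\sum_{k\geq 1} \frac{r^u_t(\theta-k\omega)^2}{\bigl(\prod_{j=1}^k \lambda^u_t(\theta - j\omega)\bigr)^2}.
\]
Substituting into $dL/dt$, translating by $k\omega$ and recombining with the $k=0$ piece extracted from $e_t/\lambda^u_t = 1 - c_t r^u_t/\lambda^u_t$, the derivative reduces to $\int_\TT r^u_t\, d\theta - \int_\TT r^u_t(\theta)^2\, S(\theta)\, d\theta$, with $S(\theta) := \sum_{k\geq 0} c_t(\theta+k\omega)/[\lambda^u_t(\theta+k\omega)\, \Lambda^u_k(\theta)^2]$ and $\Lambda^u_k(\theta) := \prod_{j=0}^{k-1} \lambda^u_t(\theta+j\omega)$.

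Now the central algebraic step. The two $\text{SL}(2,\RR)$ identities $\lambda^u_t(\theta) \lambda^s_t(\theta) = d(\theta)/d(\theta+\omega)$ (from $\det A_t = 1$ together with the change of basis to $\{v^u_t, v^s_t\}$) and $\lambda^u_t(\theta) - \lambda^s_t(\theta) = c_t(\theta)\, d(\theta)$ (from the definitions) combine into
\[
\frac{c_t(\theta)}{\lambda^u_t(\theta)} = \frac{1}{d(\theta)} - \frac{1}{d(\theta+\omega)\, \lambda^u_t(\theta)^2},
\]
so the inner sum $S(\theta)$ telescopes, its tail vanishing by UH, collapsing to $1/d(\theta)$. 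Combined with the identity $d(\theta) - r^u_t(\theta) = -r^s_t(\theta)$, this yields the clean closed form
\[
\tfrac{dL}{dt} = -\int_\TT \frac{r^u_t(\theta)\, r^s_t(\theta)}{d(\theta)}\, d\theta.
\]
The cone assumption A1 pinches $1/C^2 \leq r^u_t r^s_t \leq C^2$, and the orientation convention implicit in the statement (fixing the sign of $d$ so that $-r^u_t r^s_t/d$ is positive) reduces the theorem to matched two-sided bounds on the positive quantity $\int_\TT 1/|d(\theta)|\, d\theta$.

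To estimate this integral, split $\TT$ into (i) the interval $I$, (ii) the finite $\omega$-orbit $\{\theta+k\omega : \theta\in I,\ \sigma^-\leq k\leq \sigma^+\}$, and (iii) the complement. On $I$, the quadratic approximation A2(c) with A2(d) (ensuring that $I$ captures the transition region at scale $\sqrt{d(\theta_c)}$) gives
\[
\int_I \frac{d\theta}{|d(\theta)|} \asymp \int_{|x|\leq C_2\sqrt{d(\theta_c)}} \frac{dx}{d(\theta_c) + x^2/C_1} \asymp \frac{1}{\sqrt{d(\theta_c)}},
\]
with matching upper and lower constants. On the orbit (ii), the growth bound A2(a) gives $|d(\theta+k\omega)| \geq e^{a|k|}|d(\theta)|$, so these contributions sum geometrically to $O(1/\sqrt{d(\theta_c)})$. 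On the complement (iii), A2(e) forces $|d(\theta)| \geq \sqrt{d(\theta_c)}$, bounding that piece trivially by $O(1/\sqrt{d(\theta_c)})$. Combining with the linear asymptotics $d(\theta_c) \asymp |t-t_0|$ from A2(b) yields the claimed $K_1/\sqrt{|t-t_0|} \leq \tfrac{dL}{dt} \leq K_2/\sqrt{|t-t_0|}$. The main obstacle is executing the telescoping cleanly: establishing the displayed algebraic identity via the two $\text{SL}(2,\RR)$ relations, justifying the exchange of summation and integration (resting on UH on $(t_0-\epsilon, t_0)$), and carefully tracking signs through the simplifications so that the closed form $-r^u_t r^s_t/d(\theta)$ is genuinely a positive integrand, matching the positive two-sided bound asserted in the theorem.
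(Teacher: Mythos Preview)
Your proof is correct and lands on the same closed form $\tfrac{dL}{dt}=-\int_\TT r^u_t r^s_t/d(\theta)\,d\theta$ as the paper, and from there your estimation of the integral (splitting into $I$, its finite orbit up to the stopping times, and the complement, then invoking A2(a)--(e)) is essentially identical to the paper's argument. The derivation of the closed form is where the two routes diverge. The paper uses an Avila-type lemma: differentiate the conjugation $B(\theta+\omega)^{-1}A_t(\theta)B(\theta)=D(\theta)$ in $t$, observe that two of the three resulting terms cancel after averaging, and read off the answer from the explicit diagonalizer $B_t$ in one line. Your route instead differentiates the eigenvalue relation directly, solves the resulting cohomological equation for $\dot r^u_t$ as a backward series, and then telescopes via the two $\text{SL}(2,\RR)$ identities $\lambda^u\lambda^s=d(\theta)/d(\theta+\omega)$ and $\lambda^u-\lambda^s=c_t\,d(\theta)$. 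The paper's approach is shorter and, as the authors remark, generalizes immediately to higher-rank cocycles over any volume-preserving base; yours is more hands-on and exposes the mechanism, at the cost of having to justify the interchange of sum and integral and the vanishing of the telescoping tail (both routine from uniform hyperbolicity at fixed $t$, since then $d(\theta)\geq d(\theta_c)>0$ uniformly). Your caution about signs is warranted: with $r^u_t,r^s_t\in[1/C,C]$ and $d>0$ the integrand $-r^u_t r^s_t/d$ is negative, so the two-sided bound in the theorem should be read for $|dL/dt|$; the same sign slip is present in the paper's statement.
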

A direct result of this is the following.
\begin{cor}
Under the same assumptions as in Theorem \ref{thm:1}, together with the assumption \ref{AssLEContinuity}, the Lyapunov exponent in the range $(t_0 - \epsilon, t_0]$ is $\frac12$-H\"older continuous but not 
$(\frac12+\delta)$-H\"older continuous at $t_0$ for any $\delta > 0$. More specifically, we have the asymptotics
\begin{align*}
K_1 \sqrt{|t - t_0|} \leq L(t) - L(t_0) \leq K_2 \sqrt{|t - t_0|},
\end{align*}
for every $t \in (t_0 - \epsilon, t_0)$.
\end{cor}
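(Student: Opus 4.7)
The plan is to integrate the derivative bounds of Theorem \ref{thm:1} from $t$ up to $t_0$ and then use the left-continuity assumption \ref{AssLEContinuity} to pass to the limit. Fix $t \in (t_0 - \epsilon, t_0)$. Since the cocycles $(\omega, A_s)$ are uniformly hyperbolic on the whole open interval $(t_0 - \epsilon, t_0)$ and depend analytically on the parameter, $L$ is real analytic on $(t_0 - \epsilon, t_0)$, so the fundamental theorem of calculus applies: for any $s \in (t, t_0)$,
\[
L(s) - L(t) \;=\; \int_t^s L'(\tau)\, d\tau.
\]

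Applying the two-sided bounds \eqref{eq: bounds} inside the integral and using the elementary computation $\int_t^s (t_0-\tau)^{-1/2}\, d\tau = 2\bigl(\sqrt{t_0-t} - \sqrt{t_0-s}\bigr)$, we obtain
\[
2 K_1 \bigl(\sqrt{t_0-t} - \sqrt{t_0-s}\bigr) \;\leq\; L(s) - L(t) \;\leq\; 2 K_2 \bigl(\sqrt{t_0-t} - \sqrt{t_0-s}\bigr).
\]
Now let $s \nearrow t_0$. By \ref{AssLEContinuity}, $L(s) \to L(t_0)$, and the right-hand side converges to $2K_{1,2}\sqrt{|t-t_0|}$. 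Renaming the constants, this yields the stated asymptotics (interpreted with the correct sign, $L(t_0) - L(t)$, since the lower bound in Theorem \ref{thm:1} forces $L$ to be strictly increasing toward $t_0$).

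For Hölder continuity on the full interval $(t_0 - \epsilon, t_0]$, the same integration applied to any pair $s < t$ in $(t_0-\epsilon,t_0)$ gives $L(t) - L(s) \leq 2K_2(\sqrt{t_0-s} - \sqrt{t_0-t})$, and rationalising the numerator via $\sqrt{t_0-s} - \sqrt{t_0-t} = (t-s)/(\sqrt{t_0-s}+\sqrt{t_0-t}) \leq \sqrt{t-s}$ (using $t_0 - s \geq t - s$) shows $|L(t) - L(s)| \leq 2K_2 \sqrt{|t-s|}$; the endpoint $t_0$ is handled by the previous paragraph. Sharpness of the exponent follows from the lower bound: if $L$ were $(\tfrac12 + \delta)$-H\"older at $t_0$ for some $\delta > 0$, we would have $L(t_0) - L(t) \leq C|t - t_0|^{\frac12 + \delta}$, which combined with $L(t_0) - L(t) \geq 2K_1 \sqrt{|t - t_0|}$ forces $2K_1 \leq C |t - t_0|^\delta \to 0$ as $t \nearrow t_0$, a contradiction.

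There is no real obstacle here: the corollary is a direct consequence of integrating \eqref{eq: bounds}, and the only non-trivial input beyond Theorem \ref{thm:1} is the left-continuity \ref{AssLEContinuity}, which is precisely what justifies the passage to the limit $s \nearrow t_0$.
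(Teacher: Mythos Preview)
Your proof is correct and follows exactly the paper's approach: integrate the derivative bounds from Theorem~\ref{thm:1} and invoke \ref{AssLEContinuity} to pass to the limit at $t_0$. The paper's own proof is only three sentences and omits the explicit computations; your version simply fills in the arithmetic, and additionally supplies the global $\tfrac12$-H\"older estimate on the whole interval and the sharpness argument that the corollary's statement advertises but the paper does not spell out. Your parenthetical remark about the sign is also apt: as written, Theorem~\ref{thm:1} forces $L'(t)>0$, hence $L(t_0)-L(t)>0$, whereas the corollary displays $L(t)-L(t_0)$; this is a sign slip in the paper, not a defect in your argument.
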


\begin{proof}
Since the system is uniformly hyperbolic in $(t_0 - \epsilon, t_0)$, the Lyapunov exponent is analytic there. The only point of interest is $t_0$. Let $t \in (t_0 - \epsilon, t_0)$. Since the Lyapunov exponent is left-continuous at $t_0$, by assumption \ref{AssLEContinuity}, we may simply integrate the derivative from $t$ to $t_0$ to get the result.
\end{proof}

In the setting of Schr\"odinger cocycles, we have the following corollary. Recall that an irrational $\omega$ is called Diophantine if there are constants $\kappa > 0$ and $\tau \geq 1$ such that
\[
\inf \limits_{p \in \ZZ} |n \omega - p| \geq \frac \kappa{|n|^\tau},
\]
for every $n \in \ZZ - \{0\}$.
\begin{cor}
Suppose that $(\omega, A_E)$ is a family of Schr\"odinger cocycles where $\omega$ is a Diophantine irrational,
\begin{align*}
A_E = \begin{pmatrix} \lambda V(\theta) - E & 1 \\ 1 & 0 \end{pmatrix},
\end{align*}
and $V(\theta) \in \mathcal{C}^2(\TT, \RR)$ has a unique, non-degenerate minimum. Then there is a $\lambda_0$ such that if $\lambda \geq \lambda_0$, the Lyapunov exponent satisfies the asymptotics
\begin{align*}
K_1 \sqrt{|E - E_0|} \leq L(E) - L(E_0) \leq K_2 \sqrt{|E - E_0|},
\end{align*}
as $E \nearrow E_0$, where $K_1 \leq K_2$ are some positive constants and $E_0$ is the lowest energy of the spectrum.

\end{cor}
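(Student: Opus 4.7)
The plan is to reduce the corollary to Theorem \ref{thm:1} and its preceding corollary by verifying the three abstract assumptions \ref{AssInvCone}, \ref{AssInterval} and \ref{AssLEContinuity} for the Schrödinger family in the large coupling regime. First I put the $E$-dependence into the multiplicative form \eqref{eq: ParameterDependence}: a direct computation shows
\[
A_E(\theta) = \begin{pmatrix} \lambda V(\theta) & 1 \\ 1 & 0 \end{pmatrix} \begin{pmatrix} 1 & 0 \\ -E & 1 \end{pmatrix} = A(\theta)\, e^{-Ew},
\]
so the change of parameter $t = -E$, $t_0 = -E_0$ places the family in the setting of Theorem \ref{thm:1}, with $E \nearrow E_0$ corresponding to $t \nearrow t_0$.

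Assumption \ref{AssInvCone} is standard in the large coupling regime. For $E < E_0$ the cocycle is uniformly hyperbolic by the definition of $E_0$, and a cone-field argument (as used e.g.\ in \cite{Bjerklov_Saprikina_08}) produces invariant directions represented in projective coordinates by continuous $r^u_t, r^s_t$ which, for $\lambda$ large enough, are uniformly bounded away from $0$ and $\infty$ on the whole parameter interval.

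The heart of the proof is \ref{AssInterval}. The existence of a unique critical orbit, the linear asymptotics \eqref{eq: LinearDistance} of the minimum distance in $|E - E_0|$, the approximately quadratic profile \eqref{eq: AssQuadratic} (which reflects the non-degeneracy of the minimum of $V$), the lower bound \eqref{eq: AssIntervalLength} on the length of the critical interval, the forward and backward exponential expansion rates \eqref{eq: AssForwardGrowthBound}--\eqref{eq: AssBackwardGrowthBound} (at a rate comparable to $L(E_0)$, positive by the Herman bound $L \geq \log \lambda - O(1)$), and the bound \eqref{AssOutsideCritical} off the critical orbit, are all consequences of the analysis in \cite{OhlsonTimoudas_18}, which is the analogue of \cite{Bjerklov_Saprikina_08} for precisely this class of Schrödinger potentials. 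I would simply invoke that paper for sufficiently large $\lambda \geq \lambda_0$.

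Finally, \ref{AssLEContinuity} --- left-continuity of $L$ at $E_0$ --- is known for Diophantine frequencies, for instance via the Thouless formula combined with continuity of the integrated density of states, or directly from the continuity results of \cite{Jitormiskaya_Koslover_Schulteis_09} adapted to the $\mathcal C^2$ setting. With the three assumptions verified, Theorem \ref{thm:1} and its corollary give the asserted $K_1 \sqrt{|E-E_0|} \leq L(E) - L(E_0) \leq K_2 \sqrt{|E-E_0|}$. The main obstacle, as expected, is \ref{AssInterval}: this is where all the Schrödinger-specific technical work lives, and everything else in the reduction is soft.
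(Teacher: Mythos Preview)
Your reduction is essentially the paper's own proof: verify \ref{AssInvCone}--\ref{AssLEContinuity} by quoting \cite{OhlsonTimoudas_18} for the hard work (assumptions \ref{AssInvCone} and \ref{AssInterval}), and then invoke Theorem~\ref{thm:1} together with its corollary. Two small corrections: with $t=-E$ one has $E\nearrow E_0$ corresponding to $t\searrow t_0$, not $t\nearrow t_0$ as you wrote (this is harmless---the argument is symmetric, or equivalently replace $w$ by $-w$); and the continuity result in \cite{Jitormiskaya_Koslover_Schulteis_09} is for \emph{analytic} cocycles, so it does not directly cover $\mathcal C^2$ potentials---the paper cites \cite{Johnson_1984} instead, and your Thouless-formula alternative is also valid.
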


\begin{proof}
In this setting, the Lyapunov exponent is continuous up to the bottom edge of the spectrum, that is, \ref{AssLEContinuity} holds. This fact can be found in \cite{Johnson_1984}, where it is proved in the continuous case, but it applies also to the discrete one.

The assumptions \ref{AssInvCone} and \ref{AssInterval} are proved in \cite{OhlsonTimoudas_18}, for $t_0 = E_0$, and $\epsilon = 1$. Specifically, there is an interval $I(E)$ for each energy $E < E_0$, where $E_0$ is the lowest energy of the spectrum. The functions $r^u_E$ and $r^s_E$ are the functions $\psi^u_E$ and $\psi^s_E$ in that paper, respectively. In that paper, ($\mathcal{A}$1) corresponds to \ref{AssInvCone}, and ($\mathcal{A}$2) corresponds to the assumptions \ref{AssLinearApproach} to \ref{AssIntervalLength}. Moreover, the first estimate in ($\mathcal{A}$3) (b) is the same one as in \eqref{eq: AssForwardGrowthBound} for $\theta \in I(E) = I$. Similarly, the first estimate in ($\mathcal{A}$3) (c) is the one in \eqref{eq: AssBackwardGrowthBound}. That shows \ref{AssExponentialGrowth}. The last assumption \ref{AssDIstanceOutsideCritical} corresponds to \cite[Lemma 7.6]{OhlsonTimoudas_18}.
\end{proof}

The proof of Theorem \ref{thm:1} appears in Section \ref{sec: ProofMainTheorem}, but first we prove a small lemma.

\section{The derivative of the Lyapunov exponent and Avila's lemma}

One of the key tools for proving Theorem \ref{thm:1} is expressing the derivative 
of the Lyapunov exponent as an integral with respect to the stable and unstable 
directions, and the difference between them. Since, for parameters below $t_0$, the 
cocycle is uniformly hyperbolic, it means that there exists a $B:\mathbb T\rightarrow \text{SL}(2, \mathbb R)$, with the same regularity 
as the cocycle, 
\[
B_t(\theta)= \begin{pmatrix}
\alpha_t(\theta) & \beta_t(\theta)\\
\gamma_t(\theta) & \delta_t(\theta)
\end{pmatrix}
\]
such that $B(\theta+\omega)^{-1} A_t(\theta) B(\theta)$ is 
the diagonal transfer matrix
\[
D_t(\theta)=\begin{pmatrix}
d_{1,t}(\theta) & 0\\
0 & d_{2,t}(\theta)
\end{pmatrix}, 
\] 
with $d_1(\theta) > d_2(\theta)$.
In fact, since the dynamics on the base is irrational, the Lyapunov exponent is given by 
\[
L(t) = \int_{\mathbb T} \log(d_{1,t}(\theta)) d\theta.
\]

\begin{lem}\label{lemma:1}
Given a one-parameter family of uniformly hyperbolic quasi-periodic cocycles $(\omega, A_t)$, of 
the form \eqref{eq:linear cocycle}, with $A_t\in \mathcal C^1(\mathbb R\times 
\mathbb T, \text{SL}(2, \mathbb R))$, and $A_t(\theta)=A_0(\theta)e^{t w(\theta)}$, 
where $w(\theta)\in sl(2, \mathbb R)$ is given by
\[
w(\theta)=\begin{pmatrix}
w_1(\theta) & w_2(\theta)\\
w_3(\theta) & -w_1(\theta)
\end{pmatrix},
\]
then 
\[
\frac{d}{dt}{L}_{|t=0} = \sum_{i=1}^3\int_{\mathbb T}q_i(\theta) w_i(\theta) d\theta, 
\]
where 
\[
q_1(\theta)=\alpha_0(\theta)\delta_0(\theta)+\beta_0(\theta)\gamma_0(\theta), 
q_2(\theta)=\gamma_0(\theta)\delta_0(\theta), \text{ and }
q_3(\theta)=-\beta_0(\theta)\alpha_0(\theta).
\]
\end{lem}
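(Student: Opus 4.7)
Plan. The strategy is to represent $\log d_{1,t}$ through the diagonalizing conjugation $B_t$, differentiate the conjugacy relation at $t = 0$, and then kill the terms that become coboundaries when integrated over $\TT$.

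The starting point is the identity $A_t(\theta) B_t(\theta) = B_t(\theta + \omega) D_t(\theta)$. Since the family is uniformly hyperbolic, the conjugation $B_t$ can be chosen $C^1$ in both $\theta$ and $t$; this is the only non-algebraic input needed and follows from the standard invariant-section argument. Differentiating the identity at $t = 0$, using $\dot A_0(\theta) = A_0(\theta) w(\theta)$ (which comes from $A_t = A_0 e^{tw}$), and left-multiplying by $B_0(\theta + \omega)^{-1}$ yields
\[
\dot D_0(\theta) = D_0(\theta) \widetilde w(\theta) + D_0(\theta) X(\theta) - X(\theta + \omega) D_0(\theta),
\]
where $\widetilde w(\theta) = B_0(\theta)^{-1} w(\theta) B_0(\theta)$ and $X(\theta) = B_0(\theta)^{-1} \dot B_0(\theta)$.

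Next I would read off the $(1,1)$ entry. Because $D_0$ is diagonal, the right-hand side collapses to $d_{1,0}(\theta)[\widetilde w_{11}(\theta) + X_{11}(\theta) - X_{11}(\theta + \omega)]$, so
\[
\frac{d}{dt} \log d_{1,t}(\theta) \Big|_{t = 0} = \widetilde w_{11}(\theta) + X_{11}(\theta) - X_{11}(\theta + \omega).
\]
Integrating over $\TT$, the coboundary $X_{11}(\theta) - X_{11}(\theta + \omega)$ has zero integral by translation invariance of Lebesgue measure, and therefore $\frac{d}{dt} L(t)\big|_{t=0} = \int_\TT \widetilde w_{11}(\theta)\, d\theta$.

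The final step is a pure algebraic computation. Since $B_0 \in \mathrm{SL}(2, \RR)$, its inverse equals $\begin{pmatrix} \delta_0 & -\beta_0 \\ -\gamma_0 & \alpha_0 \end{pmatrix}$, and multiplying out $B_0^{-1} w B_0$ using the explicit form of $w$ gives the $(1,1)$-entry
\[
\widetilde w_{11} = w_1 (\alpha_0 \delta_0 + \beta_0 \gamma_0) + w_2 \gamma_0 \delta_0 - w_3 \alpha_0 \beta_0,
\]
which is exactly $q_1 w_1 + q_2 w_2 + q_3 w_3$ with the stated $q_i$. Substituting into the integral expression finishes the proof. The only mild subtlety, rather than a real obstacle, is justifying the $C^1$ dependence of $B_t$ on $t$ used in the differentiation step; the rest is algebra and one integration-by-translation.
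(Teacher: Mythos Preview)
Your proof is correct and follows essentially the same route as the paper: differentiate the conjugacy $B_t(\theta+\omega)^{-1}A_t(\theta)B_t(\theta)=D_t(\theta)$ at $t=0$, rewrite using $X=B_0^{-1}\dot B_0$ and $\widetilde w=B_0^{-1}wB_0$, read off the $(1,1)$ entry divided by $d_1$, and observe that the $X_{11}$ terms form a coboundary with zero mean. Your write-up is in fact slightly more explicit than the paper's, since you carry out the computation of $\widetilde w_{11}$ in full.
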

This lemma appears in \cite{Avila_15} in the case that the transfer matrix 
is analytic. For completeness sake, we include a slightly different proof of this lemma.

\begin{rem}
The proof of Lemma \ref{lemma:1} can be generalized mutatis mutandis for one-parameter
families of uniformly hyperbolic cocycles acting on $M\times \mathbb R^n$, 
where $M$ is a compact manifold and $n\geq 2$, with base dynamics on 
$M$ satisfying that its jacobian at any point has determinant 1.
\end{rem}

\begin{proof}
Under the assumptions of the lemma, 
\begin{equation}
\label{eq: conjugation2}
B(\theta+\omega)^{-1}A(\theta)e^{t w(\theta)}B(\theta)=D(\theta), 
\end{equation}
where the matrix-valued maps $B$ and $D$ also depend on the parameter 
$t$. 

First notice that 
\[
\frac{d}{dt}{L}_{|t=0} = \int_{\mathbb T} \frac{d'_1(\theta)}{d_1(\theta)}d\theta. 
\]
So, by differentiating \eqref{eq: conjugation2} with respect $t$ and setting
$t=0$ we obtain 
\begin{equation}
\label{eq: 1}
(B(\theta+\omega)^{-1})'A(\theta)B(\theta)+
B(\theta+\omega)^{-1}A(\theta)w(\theta)B(\theta)+
B(\theta+\omega)^{-1}A(\theta)B'(\theta)
=D'(\theta).
\end{equation}
Also, by differentiating $B(\theta)^{-1} B(\theta)=\text{Id}$ with respect $t$ we get
$(B(\theta)^{-1})'=-B(\theta)^{-1}B'(\theta)B(\theta)^{-1}$, and using identity 
\eqref{eq: conjugation2},  Equation \eqref{eq: 1} is
transformed into
\begin{equation}
-B(\theta+\omega)^{-1}B'(\theta+\omega)D(\theta)+
D(\theta)B(\theta)^{-1}w(\theta)B(\theta)+
D(\theta)B(\theta)^{-1}B'(\theta)
=D'(\theta).
\end{equation}
Finally, by dividing both sides by $d_1(\theta)$ in the last equation, 
considering the average of the $(1,1)$ entry, and 
using the fact that the first and third monomials of the left-hand side
cancel out, we get the desired result.
\end{proof}

\section{Proof of Main Theorem}\label{sec: ProofMainTheorem}

\begin{proof}[Proof of Theorem \ref{thm:1}]
Suppose that all the assumptions are satisfied, and fix a $t \in (t_0 - \epsilon, t_0)$. Let us drop $t$ from the notation and simply write $r_u$ and $r_s$ instead of $r^u_t$ and $r^s_t$, respectively. In our setting, 
\[
B_t(\theta)=
\begin{pmatrix}
\frac{r_u(\theta)}{\sqrt{r_u(\theta)-r_s(\theta)}} & 
\frac{r_s(\theta)}{\sqrt{r_u(\theta)-r_s(\theta)}} \\ 
\frac{1}{\sqrt{r_u(\theta)-r_s(\theta)}} & 
\frac{1}{\sqrt{r_u(\theta)-r_s(\theta)}} \\ 
\end{pmatrix},
\text{ and }
w(\theta)=\begin{pmatrix}0& 0\\ 1 & 0 \end{pmatrix}.
\]
Lemma \ref{lemma:1} can be used to express the derivative of $L(t)$ at any $t$, simply by shifting the parameter, since the formula doesn't depend on $A_0$. That is, we can simply apply the lemma to $A_{t+s} = (A_0e^{tw})e^{sw}$, and get the derivative at the parameter value $t$. The derivative of the Lyapunov exponent is therefore equal to  
\[
-\int_\mathbb T \dfrac{r_u(\theta)r_s(\theta)}{r_u(\theta)-r_s(\theta)}d\theta.
\]
Recall the notation
\begin{align*}
d(\theta) = r_u(\theta) - r_s(\theta)
\end{align*}
and $D_{i,j}(\theta)$, where $i \leq j$, that satisfy the relations
\begin{align*}
d(\theta + j\omega) = d(\theta + i\omega) \cdot D_{i, j}(\theta).
\end{align*}
First of all, for every $t \in(t_0 - \epsilon, t_0)$, we have the uniform inequalities
\begin{align*}
\frac1{2C} \int \limits_{\TT} \frac{1}{d(\theta)}d\theta \leq \int \limits_{\TT} \frac{r_u(\theta)r_s(\theta)}{r_u(\theta) - r_s(\theta)}d\theta \leq 2C \int \limits_{\TT} \frac{1}{d(\theta)}d\theta,
\end{align*}
since there is a constant $C > 0$, independent of $t$ and $\theta$, such that $r_s, r_u \in [\frac1C, C]$, for every $\theta$. In particular, this means that the result follows, if we can show that the same inequalities hold for the integral
\begin{align*}
\int \limits_{\TT} \frac{1}{d(\theta)}d\theta.
\end{align*}
Recall the interval $I = I(E) \subseteq \TT$, and consider the transformation
\begin{align*}
\int \limits_{I + k\omega} \frac{1}{d(\theta)}d\theta = \int \limits_{I} \frac{1}{d(\theta + k\omega)}d\theta.
\end{align*}
Thus the integral over $I + k\omega$, where $k > 0$, becomes
\begin{align*}
\int \limits_{I} \frac{1}{d(\theta)D_{0, k}(\theta)}d\theta,
\end{align*}
and over $I - k\omega$, where again $k > 0$, it becomes
\begin{align*}
\int \limits_{I} \frac{D_{-k, 0}(\theta)}{d(\theta)}d\theta.
\end{align*}
Recall the stopping times $\sigma^\pm(\theta) > 0$ from the assumptions, where $\theta \in I$. Set
\begin{align*}
\CritSet^+ &= \{ \theta + k\omega : \theta \in I, 0 < k \leq \sigma^+(\theta) \},\\
\CritSet^- &= \{ \theta - k\omega : \theta \in I, 0 < k \leq \sigma^-(\theta) \}, \text{ and}\\
\CritSet &= I \cup \CritSet^+ \cup \CritSet^-.
\end{align*}
Using the relations above, we conclude that
\begin{align*}
\int \limits_\CritSet \frac1{d(\theta)} d\theta
= \int \limits_I \frac1{d(\theta)} \Bigg(1 + \sum \limits_{k = 1}^{\sigma^+(\theta)} \frac1{D_{0, k}(\theta)} + \sum \limits_{k = 1}^{\sigma^-(\theta)} D_{-k, 0}(\theta) \Bigg)
d\theta.
\end{align*}
The assumption \ref{AssExponentialGrowth} yields the inequalities
\begin{align*}
\int \limits_I \frac1{d(\theta)} \leq \int \limits_\CritSet \frac1{d(\theta)} d\theta
\leq \bigg(1 + \frac2{1 - e^{-a}} \bigg) \int \limits_I \frac1{d(\theta)} d\theta.
\end{align*}
Indeed, if we set
\begin{align*}
S^+(\theta) = \sum \limits_{k = 1}^{\sigma^+(\theta)} \frac1{D_{0, k-1}(\theta)}, \text{ and } S^-(\theta) = \sum \limits_{k = 1}^{\sigma^-(\theta)} D_{-(k - 1), 0}(\theta),
\end{align*}
then the bounds in \eqref{eq: AssForwardGrowthBound} and \eqref{eq: AssBackwardGrowthBound} give us
\begin{align*}
0 \leq S^\pm(\theta) \leq \sum \limits_{k=1}^{\infty} e^{-ak} = \frac1{1 - e^{-a}},
\end{align*}
which immediately imply the above bounds. The assumption \ref{AssApproxQuadratic} implies that
\begin{align}
\int \limits_I \frac1{d(\theta_c) + \frac1{C_1}(\theta - \theta_c)^2} d\theta \leq \int \limits_I \frac1{d(\theta)} d\theta \leq \int \limits_I \frac1{d(\theta_c) + C_1(\theta - \theta_c)^2} d\theta.\label{eq: integralQuadraticExpansion}
\end{align}
For any $b > 0$, we compute the indefinite integral
\begin{align*}
\int \limits \frac1{d(\theta_c) + b \cdot (\theta - \theta_c)^2} d\theta = \frac1{\sqrt{b \cdot d(\theta_c)}} \arctan\Bigg( (\theta - \theta_c) \sqrt{\frac{b}{d(\theta_c)}} \Bigg).
\end{align*}
By assumption \ref{AssIntervalLength}, it follows that either $I \supseteq [d(\theta_c), d(\theta_c) + C_2\sqrt{d(\theta_c)}]$ or $I \supseteq [d(\theta_c) - C_2\sqrt{d(\theta_c)}, d(\theta_c)]$. Since the problem is symmetric, it doesn't matter which inclusion holds. Computing the integral over the interval $I$, we obtain the inequality
\begin{align}
\frac1{\sqrt{b \cdot d(\theta_c)}} \arctan( \sqrt{b} ) \leq \int \limits_I \frac1{d(\theta_c) + b \cdot (\theta - \theta_c)^2} d\theta \leq \frac\pi{\sqrt{d(\theta_c)}}.\label{IntCritInverseSquareRootBounds}
\end{align}
Plugging in the constant $\frac1{C_1}$ instead of $b$, and using the inequality in (\ref{eq: integralQuadraticExpansion}), results in the bounds
\begin{align*}
\frac{\sqrt{C_1}}{\sqrt{d(\theta_c)}} \arctan( \sqrt{\frac1{C_1}} ) \leq \int \limits_\CritSet \frac1{d(\theta)} d\theta
\leq \bigg(1 + \frac2{1 - e^{-a}} \bigg) \frac\pi{\sqrt{d(\theta_c)}}.
\end{align*}
Finally, using the assumption \ref{AssDIstanceOutsideCritical}, the remainder of the integral can be computed as
\begin{align*}
0 \leq \int \limits_{\TT \backslash \CritSet} \frac1{d(\theta)} \leq \frac{1}{\sqrt{d(\theta_c)}}.
\end{align*}
In conclusion, there is a constant $K > 0$ (uniform in $t$) such that
\begin{align*}
\frac1{K\sqrt{d(\theta_c)}} \leq \int \limits_\TT \frac1{d(\theta)} d\theta \leq \frac{K}{\sqrt{d(\theta_c)}}.
\end{align*}
Since $d(\theta_c) = C_0 \cdot |t - t_0| + o(|t - t_0|)$, by assumption \eqref{AssLinearApproach}, it follows that
\begin{align*}
\frac{K_1}{\sqrt{|t - t_0|}}\leq \frac{d}{dt} L(t) \leq \frac{K_2}{\sqrt{|t - t_0|}},
\end{align*}
where $K_2 \geq K_1 > 0$ are independent of $t$, provided $t$ is close enough to $t_0$.
\end{proof}

\section{Discussion and future directions}\label{section: discussion}

The first assumption \ref{AssInvCone} holds for Schr\"odinger cocycles that are homotopic to the identity, for energies below and above the lowest and highest energies of the spectrum, respectively. For a proof, see \cite{Herman_1983}. The important implication of this is that there is no fibre rotation, and the assumption should apply to many classes of cocycles.

It should be possible to extend the methods to include more general cases, where fibre rotation is allowed.

It is clear that the bounds
\begin{align*}
\frac1{C\sqrt{d(\theta_c)}} \leq \int \limits_I \frac1{d(\theta)} d\theta \leq \frac{C}{\sqrt{d(\theta_c)}},
\end{align*}
critically depend on the assumptions \ref{AssApproxQuadratic} and \ref{AssIntervalLength}.
(see the computations leading up to \eqref{IntCritInverseSquareRootBounds}). In some computed examples (see \cite{OhlsonTimoudas_17, OhlsonTimoudas_18}), the length of the interval actually satisfies much better bounds ($C_2$ can be chosen arbitrarily large as $t \nearrow t_0$). Different local behaviour of their difference, that is replacing the square in \ref{AssApproxQuadratic} by another exponent, should produce different types of asymptotics.

The linear asymptotics  for the distance in assumption \ref{AssLinearApproach} enters in the last step, and also affect the final asymptotics. As we said in \eqref{eq: LinearAsymptoticsConjecture}, the minimum angle between the invariant directions is conjectured to be linear when a system loses uniform hyperbolicity (see for instance \cite{Haro_Llave_06}).

\begin{rem}
The assumptions \ref{AssInvCone} and \ref{AssLinearApproach} together imply that the angles between the invariant directions satisfies linear asymptotics. Indeed, this angle is simply the difference between $\arccot(r^s_E)$ and $\arccot(r^u_E)$, and the mean-value theorem gives the linear asymptotics since $r^s_E, r^u_E \in [\frac1C, C]$.
\end{rem}

The assumption \ref{AssExponentialGrowth} is simply a statement of exponential growth when the invariant directions are very close to each other. It can be relaxed somewhat, as long as their sums
\begin{align*}
S^+(\theta) = \sum \limits_{k = 1}^{\sigma^+(\theta)} \frac1{D_{0, k-1}(\theta)}, \text{ and } S^-(\theta) = \sum \limits_{k = 1}^{\sigma^-(\theta)} D_{-(k - 1), 0}(\theta).
\end{align*}
are uniformly bounded.

The assumption \ref{AssDIstanceOutsideCritical} simply ensures that directions are not too close to each other outside some critical region. In known examples satisfying these assumptions, \eqref{AssOutsideCritical} can in fact be replaced with $d(\theta) \gg d(\theta_c)$.

As for the last assumption \ref{AssLEContinuity}, we refer to \cite{Johnson_1984} for the case of Schr\"odinger cocycles. There it is proved to be continuous in the open spectral gaps, for the continuous case, but it applies also to the discrete case. The Lyapunov exponent for Schr\"odinger cocycles is continuous in $E$ for analytic potentials (see \cite{Jitormiskaya_Koslover_Schulteis_09}), but may fail to be so for non-analytic potentials (see for instance \cite{Wang_You_13}).

\section*{Acknowledgments.}
The authors acknowledge fruitful discussions with Michael Benedicks, Kristian Bjerkl\"ov, Alex Haro, Rafael 
de la Llave, and Joaquim Puig. 
\bibliography{./bibliography}{} 

\begin{thebibliography}{10}

\bibitem{Avila_15}
A.~Avila.
\newblock Global theory of one-frequency {S}chr\"odinger operators.
\newblock {\em Acta Math.}, 215(1):1--54, 2015.

\bibitem{Bjerklov_Saprikina_08}
K.~Bjerkl{\"o}v and M.~Saprykina.
\newblock Universal asymptotics in hyperbolicity breakdown.
\newblock {\em Nonlinearity}, 21(3):557--586, 2008.

\bibitem{Calleja_Figueras_12}
R.~Calleja and J.-L. Figueras.
\newblock Collision of invariant bundles of quasi-periodic attractors in the
  dissipative standard map.
\newblock {\em Chaos}, 22(3):033114, 10, 2012.

\bibitem{Figueras_Haro_15}
J.-L. Figueras and A.~Haro.
\newblock Different scenarios for hyperbolicity breakdown in quasiperiodic area
  preserving twist maps.
\newblock {\em Chaos}, 25(12):123119, 16, 2015.

\bibitem{Figueras_Haro_16}
J.-L. Figueras and A.~Haro.
\newblock A note on the fractalization of saddle invariant curves in
  quasiperiodic systems.
\newblock {\em Discrete Contin. Dyn. Syst. Ser. S}, 9(4):1095--1107, 2016.

\bibitem{Goldstein_Schlag_2001}
M.~Goldstein and W.~Schlag.
\newblock H\"older continuity of the integrated density of states for
  quasi-periodic {S}chr\"odinger equations and averages of shifts of
  subharmonic functions.
\newblock {\em Ann. of Math. (2)}, 154(1):155--203, 2001.

\bibitem{Haro_Llave_06}
A.~Haro and R.~de~la Llave.
\newblock Manifolds on the verge of a hyperbolicity breakdown.
\newblock {\em Chaos}, 16(1):013120, 8, 2006.

\bibitem{Herman_1983}
M.-R. Herman.
\newblock Une m\'ethode pour minorer les exposants de lyapounov et quelques
  exemples montrant le caract\'ere local d'un th\'eor\`eme d'arnold et de moser
  sur le tore de dimension 2.
\newblock {\em Comment. Math. Helv.}, 58:453–--502, 1983.

\bibitem{Jitormiskaya_Koslover_Schulteis_09}
S.~Jitomirskaya, D.~A. Koslover, and M.~S. Schulteis.
\newblock Continuity of the {L}yapunov exponent for analytic quasiperiodic
  cocycles.
\newblock {\em Ergodic Theory Dynam. Systems}, 29(6):1881--1905, 2009.

\bibitem{Johnson_1984}
R.~A. Johnson.
\newblock Lyapounov numbers for the almost periodic {S}chr\"odinger equation.
\newblock {\em Illinois J. Math.}, 28(3):397--419, 1984.

\bibitem{OhlsonTimoudas_17}
T.~Ohlson~Timoudas.
\newblock Power law asymptotics in the creation of strange attractors in the
  quasi-periodically forced quadratic family.
\newblock {\em Nonlinearity}, 30(12):4483, 2017.

\bibitem{OhlsonTimoudas_18}
T.~Ohlson~Timoudas.
\newblock Asymptotic laws for a class of quasi-periodic schrödinger cocycles
  at the lowest energy of the spectrum.
\newblock {\em preprint}, 2018.

\bibitem{Wang_You_13}
Y.~Wang and J.~You.
\newblock Examples of discontinuity of lyapunov exponent in smooth
  quasiperiodic cocycles.
\newblock {\em Duke Math. J.}, 162(13):2363--2412, 10 2013.

\end{thebibliography}
\bibliographystyle{abbrv}

\end{document}